\newtheorem{dfn}{Definition}[section]
\newtheorem{thm}[dfn]{Theorem}
\newtheorem{prop}[dfn]{Proposition}
\newtheorem{lem}[dfn]{Lemma}
\newtheorem{remark}[dfn]{Remark}
\renewcommand{\p@enumi}{A.}
\DeclareMathOperator*{\diam}{diam}
\DeclareMathOperator*{\dist}{dist}
\title[]{Observability inequalities for degenerate transport equations}
\author{Giuseppe Floridia}
\address{Università Mediterranea di Reggio Calabria,
Department PAU
Via dell'Università 25  
89124 Reggio Calabria, Italy
\&
INdAM Unit, Univ. di Catania, Italy}
\email{floridia.giuseppe@icloud.com}
\author{Hiroshi Takase}
\address{Graduate School of Mathematical Sciences, The University of Tokyo, 3-8-1 Komaba, Meguro-ku, Tokyo 153-8914, Japan}
\email{htakase@ms.u-tokyo.ac.jp}
\date{July 25, 2021}
\keywords{Observability inequality, degenerate hyperbolic equation, Carleman estimate}
\subjclass[2020]{93B07, 35L80, 	35R30}
\begin{document}
\begin{abstract}
In this paper we prove an observability inequality for a degenerate transport equation with time-dependent coefficients. First we introduce a local in time Carleman estimate for the degenerate equation, then we apply it to obtain a global in time observability inequality by using also an energy estimate.
\end{abstract}
\maketitle
\section{Introduction and Main result}
Let $d\in\mathbb{N}$, $T>0$, and $\Omega\subset\mathbb{R}^d$ be a bounded domain with smooth boundary $\partial\Omega$ and $\nu(x)$ be the unit outer normal to $\partial\Omega$ at $x\in\partial\Omega$. Without loss of generality, we suppose $0\in\Omega$. We set $Q:=\Omega\times(0,T)$ and $\Sigma:=\partial\Omega\times(0,T)$. We introduce the differential operator $A$ such that
\begin{equation}\label{A}Au(x,t):=\partial_tu+H(t)\cdot\nabla u,\end{equation}
where $H(t):=(H_1(t),\ldots,H_d(t))$ is a continuous vector-valued function on $[0,T]$.

A lot of inverse problems via Carleman estimates for transport equations have been studied. Klibanov and Pamyatnykh \cite{Klibanov2008} proved a global uniqueness theorem for an inverse coefficient problem. Gaitan and Ouzzane \cite{Gaitan2014}, Machida and Yamamoto \cite{Machida2014}, and G\"olgeleyen and Yamamoto \cite{Golgeleyen2016} proved Lipschitz stabilities for inverse coefficient and source problems via global Carleman estimates for transport equations with variable coefficients. Cannarsa, Floridia, G\"olgeleyen, and Yamamoto \cite{Cannarsa2019} proved local H\"older stability to determine principal terms and zeroth-order terms. We should note that these results were all for transport equations the coefficients of which do not depend on time variable $t$ but depend on space variable $x$. In regard to transport equations having a time-dependent principal part, Cannarsa, Floridia, and Yamamoto \cite{Cannarsa2019a} proved an observability inequality for the operator $A$ defined by \eqref{A} with $|H(t)|>0$ for all $t\in[0,T]$, i.e., non-degenerate case, which was motivated by applications to inverse problems. In this paper, we eliminate the assumption on the positivity of $|H(t)|$ and prove the observability inequality in the degenerate case. Although this paper is inspired by \cite{Cannarsa2019a}, we note that our methodology is a little different from it, since we do not use the partition arguments employed in \cite{Cannarsa2019a}. Moreover, we prove the observability inequality using a synthetic technique recently introduced in \cite{Huang2020} by Huang, Imanuvilov, and Yamamoto, without using the classical cut-off arguments in the proof of the observability through the Carleman estimate. This enables us to simplify proofs of observability inequalities. 

For more applications of Carleman estimates to inverse problems, controllability, and unique continuations for hyperbolic equations, readers are referred to Bellassoued and Yamamoto \cite{Yamamoto2017}, and Takase \cite{Takase2020a}. They established Carleman estimates for second-order hyperbolic operators with variable coefficients on manifolds.
Moreover, for degenerate evolutions equations there is a extensive literature, one can see, e.g., Floridia \cite{Floridia2014} and Floridia, Nitsch, and Trombetti \cite{Floridia2020}.

\subsubsection*{Structure of the paper} In this section, after describing the problem formulation and some notations, we present our main result in Theorem \ref{observability}.
In section 2, we prepare some propositions needed to prove Theorem \ref{observability}. In particular, we obtain the energy estimate (see Lemma \ref{energy} and Proposition \ref{continuity}), and the Carleman estimate for the degenerate case (see Proposition \ref{Carleman}), which play important roles in proving the main result. Finally, in section 3 we prove Theorem \ref{observability}. In section 4, using the methodology of this paper we obtain an observability inequality for the non-degenerate case, studied in \cite{Cannarsa2019a}, by a proof shorter than one in \cite{Cannarsa2019a}.

In this paper, we consider the degenerate case, where we impose the following assumptions on the vector field $H\in C([0,T];\mathbb{R}^d)$:
\begin{equation}\label{degenerate}H(0)=0;\end{equation}
\begin{gather}\label{positivity}\exists T_1\in(0,T],\ \exists\rho>0\ \text{s.t.}\ H\in C^1([0,T_1];\mathbb{R}^d)\ \text{and}\\
\min_{t\in[0,T_1]}|H'(t)|\ge\rho.\notag\end{gather}
Under assumptions \eqref{degenerate} and \eqref{positivity}, we consider the Cauchy problem
\begin{align}\label{boundary}\begin{cases}Au=\partial_tu+H(t)\cdot\nabla u=0\quad &\text{in}\ Q,\\
u=g\quad &\text{on}\ \Sigma,\end{cases}\end{align}
where $g\in L^2(\Sigma)$, and prove an observability inequality in Theorem \ref{observability}. Unlike the non-degenerate case by Cannarsa, Floridia, and Yamamoto \cite{Cannarsa2019}, we should impose the extra assumption \eqref{positivity} on the positivity of $|H'(t)|$ due to the degeneration \eqref{degenerate}. Nevertheless, the regularity class in \eqref{positivity} imposed on $H$ is the same one as in \cite{Cannarsa2019a}.
 
Before describing mathematical settings, we mention a synthetic statement of the main result Theorem \ref{observability}. We note to prove an observability inequality for a hyperbolic equation the observation time should be given sufficiently large due to the finite propagation speed (e.g., Bardos, Lebeau, and Rauch \cite{Bardos1992}). Theorem \ref{observability} claims that if the direction of the unit vector $\frac{H'(t)}{|H'(t)|}$ changes moderately comparing with the time for the distant wave to reach the boundary, then we can obtain the observability inequality \eqref{observ_ineq} for a sufficient large observation time. To formulate this situation mathematically, we define some preliminary notations.

\begin{dfn}\label{t_1}
Let $T>0$, $c_0\in(\frac{1}{\sqrt{2}},1)$, and $H$ be a vector-valued function satisfying \eqref{positivity}. We define a positive number $t_1\in(0,T_1]$ such that
\begin{equation}\label{def}t_1:=\sup\left\{\tau\in[0,T_1]\ \middle|\ \frac{H'(t)}{|H'(t)|}\cdot\frac{H'(0)}{|H'(0)|}\ge c_0,\quad \forall t\in[0,\tau]\right\}.\end{equation}
\end{dfn}

\begin{remark}
Note that $t_1>0$ because $H'$ is continuous.
\end{remark}


By the definition of the positive time $t_1\in(0,T_1]$ introduced in \eqref{def}, the angle between $\frac{H'(t)}{|H'(t)|}$ and $\frac{H'(0)}{|H'(0)|}$ is less than or equal to $\frac{\pi}{4}$ for $t\in [0,t_1]$. The positive time $t_1$ will be crucial to prove the observability inequality \eqref{observ_ineq} in Theorem \ref{observability}. The next lemma is a basic property for $H'$ in the time interval $[0,t_1]$.

\begin{lem}\label{angle_lem}
Let $T>0$, $c_0\in(\frac{1}{\sqrt{2}},1)$, $H$ be a vector-valued function satisfying \eqref{positivity}, and $t_1\in(0,T_1]$ be the positive number defined by \eqref{def}. Then, there exists $x_0\in\overline{\Omega}^c:=\mathbb{R}^d\setminus\overline{\Omega}$ such that
\begin{equation}\label{angle}\min_{(x,t)\in\overline{\Omega}\times[0,t_1]}\frac{H'(t)\cdot(x-x_0)}{|H'(t)||x-x_0|}\ge 2c_0^2-1(>0).\end{equation}
\end{lem}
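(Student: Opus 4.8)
The plan is to place $x_0$ far down the ray opposite to $H'(0)$, so that, viewed from $x_0$, the bounded set $\overline{\Omega}$ is contained in a thin cone about the direction $e_0:=H'(0)/|H'(0)|$, which is well defined since $|H'(0)|\ge\rho>0$ by \eqref{positivity}. Concretely, I would fix $M>0$ with $\overline{\Omega}\subset\overline{B(0,M)}$ --- possible because $\Omega$ is bounded --- choose $R>0$ so large that $\frac{R-M}{R+M}\ge c_0$ (in particular $R>M$), and set $x_0:=-Re_0$. Then $|x_0|=R>M$, so indeed $x_0\in\overline{\Omega}^c$.

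Next I would estimate two angles. First, for every $x\in\overline{\Omega}$, writing $x-x_0=x+Re_0$ gives $(x-x_0)\cdot e_0=x\cdot e_0+R\ge R-M$ and $|x-x_0|\le|x|+R\le M+R$, hence
\[\frac{(x-x_0)\cdot e_0}{|x-x_0|}\ge\frac{R-M}{R+M}\ge c_0,\]
i.e.\ the angle between $x-x_0$ and $e_0$ is at most $\theta_0:=\arccos c_0$, and $\theta_0\in(0,\tfrac{\pi}{4})$ since $c_0\in(\tfrac{1}{\sqrt2},1)$. Second, by the definition \eqref{def} of $t_1$, for every $t\in[0,t_1]$ the angle between $H'(t)$ and $e_0$ is also at most $\theta_0$.

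Finally I would combine the two estimates. The cleanest way is to use that $(a,b)\mapsto\arccos(a\cdot b)$ is the geodesic distance on the unit sphere and so obeys the triangle inequality: applying it with $a=H'(t)/|H'(t)|$, $b=e_0$, $c=(x-x_0)/|x-x_0|$, the angle between $a$ and $c$ is at most $2\theta_0<\tfrac{\pi}{2}$, and since $\cos$ is decreasing on $[0,\pi]$ this yields $\frac{H'(t)\cdot(x-x_0)}{|H'(t)||x-x_0|}\ge\cos 2\theta_0=2c_0^2-1$, uniformly in $(x,t)\in\overline{\Omega}\times[0,t_1]$, which is \eqref{angle}; moreover $2c_0^2-1>0$ because $c_0>\tfrac{1}{\sqrt2}$. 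If one prefers to avoid spherical geometry, the same bound follows by decomposing $a$ and $c$ into their components parallel and orthogonal to $b$ and estimating the product of the orthogonal parts by Cauchy--Schwarz, giving $a\cdot c\ge(a\cdot b)(c\cdot b)-\sqrt{(1-(a\cdot b)^2)(1-(c\cdot b)^2)}\ge 2c_0^2-1$. I expect the only delicate point to be this last step: one must ensure the two angles sum to at most $\pi$ so that monotonicity of $\cos$ applies, which is precisely the role of the hypothesis $c_0>\tfrac{1}{\sqrt2}$ (equivalently $\theta_0<\tfrac{\pi}{4}$); everything else reduces to choosing $R$ large.
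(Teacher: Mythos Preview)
Your argument is correct and follows essentially the same route as the paper: choose $x_0=-R\,\dfrac{H'(0)}{|H'(0)|}$ with $R$ large enough that the cone from $x_0$ containing $\overline{\Omega}$ has half-angle at most $\arccos c_0$, and then combine this with the angle bound from \eqref{def} via the spherical triangle inequality (what the paper calls the ``trigonometric addition formulas''). Your explicit Cauchy--Schwarz alternative for the last step is a nice addition that makes that part of the argument more self-contained than the paper's version.
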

\begin{proof}
If we take $x_0:=-R\theta_0\in\overline{\Omega}^c$ for $R>\frac{1+c_0}{1-c_0}\diam\Omega$ and $\theta_0:=\frac{H'(0)}{|H'(0)|}$, we find
\begin{align*}(x-x_0)\cdot\theta_0&=x\cdot\theta_0+R\ge R-|x|\ge R-\diam\Omega\\
&>c_0(R+\diam\Omega)\ge c_0|x-x_0|\end{align*}
holds for all $x\in\overline{\Omega}$, which implies $\displaystyle\min_{(x,t)\in\overline{\Omega}\times[0,t_1]}\frac{x-x_0}{|x-x_0|}\cdot\theta_0\ge c_0$. Moreover, taking $\displaystyle\min_{(x,t)\in\overline{\Omega}\times[0,t_1]}\frac{H'(t)}{|H'(t)|}\cdot\theta_0\ge c_0$ into account, we finally conclude \eqref{angle} is true by the trigonometric addition formulas for the angle between $\frac{x-x_0}{|x-x_0|}$ and $\theta_0$, and the angle between $\frac{H'(t)}{|H'(t)|}$ and $\theta_0$.
\end{proof}

\begin{figure}[htbp]
\centering\includegraphics[scale=0.25]{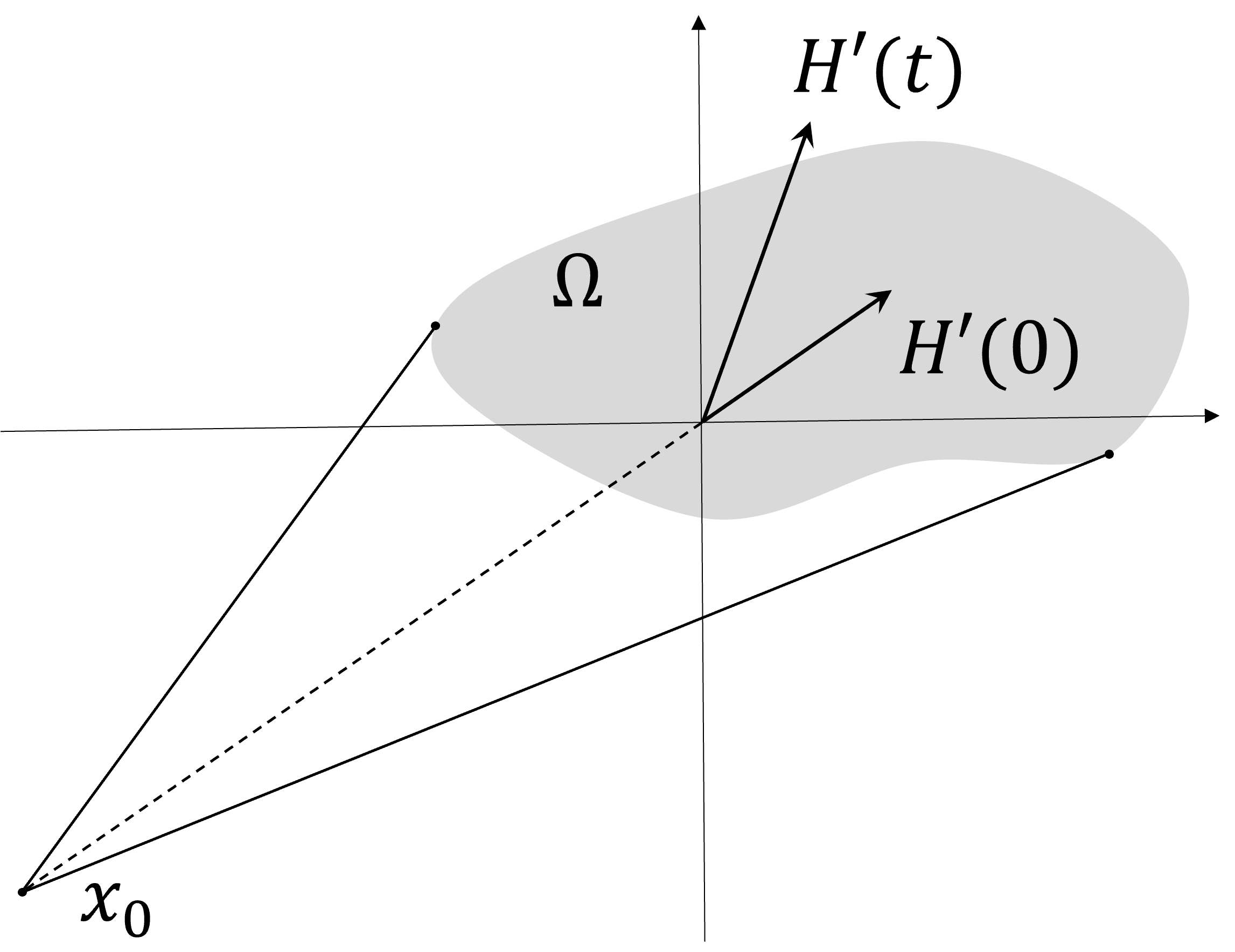}
\caption{The situation of $H'(t)$ and $x_0\in\overline{\Omega}^c$ in Lemma \ref{angle_lem}.}
\end{figure}

For a fixed $x_0\in\overline{\Omega}^c$ satisfying \eqref{angle}, define the positive number
\begin{equation}\label{T_0}T_0:=\sqrt{\frac{\displaystyle\max_{x\in\overline{\Omega}}|x-x_0|^2-\min_{x\in\overline{\Omega}}|x-x_0|^2}{\delta}},\end{equation}
where
\begin{equation}\label{delta}\delta:=\rho(2c_0^2-1)\dist(x_0,\Omega)>0.\end{equation}

The next theorem is our main result in this paper.

\begin{thm}\label{observability}
Let $T>0$, $c_0\in(\frac{1}{\sqrt{2}},1)$, $H\in C([0,T];\mathbb{R}^d)$, and $g\in L^2(\Sigma)$. Assume \eqref{degenerate} and \eqref{positivity}. If the number $t_1\in(0,T_1]$ defined by \eqref{def} satisfies $T_0<t_1$ for some $x_0\in\overline{\Omega}^c$ satisfying \eqref{angle}, then there exists a constant $C>0$ independent of $g\in L^2(\Sigma)$ such that for all $t\in[0,T]$,
\begin{equation}\label{observ_ineq}\|u(\cdot,t)\|_{L^2(\Omega)}\le C\|g\|_{L^2(\Sigma)}\end{equation}
holds for all $u\in H^1(Q)$ satisfying \eqref{boundary}.
\end{thm}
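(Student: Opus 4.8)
The plan is to follow the two-ingredient strategy advertised in the abstract: a local-in-time Carleman estimate near $t=0$ together with a global energy estimate. First I would fix $x_0\in\overline{\Omega}^c$ satisfying \eqref{angle} and set up the weight function built from $\varphi(x,t):=|x-x_0|^2-\delta t^2$ (or a closely related quadratic), which is the natural candidate because the level sets of $|x-x_0|^2$ are spheres centered outside $\overline\Omega$ and the transport term $H'(t)\cdot(x-x_0)$ has a definite sign on $\overline\Omega\times[0,t_1]$ by Lemma \ref{angle_lem}. The number $T_0$ in \eqref{T_0} is exactly the time at which the "distant" level set $\{|x-x_0|^2 = \max_{\overline\Omega}|x-x_0|^2\}$ catches up with the "near" one $\{|x-x_0|^2=\min_{\overline\Omega}|x-x_0|^2\}$ under this quadratic flow, so the hypothesis $T_0<t_1$ guarantees that on $\Omega\times(0,t_1)$ the level sets of $\varphi$ foliate the whole region in the correct way; this is the pseudoconvexity-type condition that makes the Carleman estimate with observation only on $\Sigma$ work.

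Next I would invoke Proposition \ref{Carleman} (the Carleman estimate for the degenerate operator $A$) on a time slab $(0,t_1)$: for the solution $u$ of \eqref{boundary}, one obtains a bound of the form
\begin{equation*}
s\int_0^{t_1}\!\!\int_\Omega e^{2s\varphi}|u|^2\,dx\,dt \le C\int_0^{t_1}\!\!\int_{\partial\Omega} e^{2s\varphi}|g|^2\,d\sigma\,dt + (\text{boundary-in-}t\ \text{terms at}\ t=0,t_1),
\end{equation*}
where $s>0$ is the large Carleman parameter. The point of the synthetic technique from \cite{Huang2020} is to avoid cutting $u$ off in time: instead one keeps the endpoint terms at $t=0$ and $t=t_1$ and controls them. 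Since $H(0)=0$ the operator degenerates at $t=0$ but the flow $t\mapsto \int_0^t H$ still moves, and crucially $H'(0)\neq 0$, so the weight $\varphi$ still decreases in $t$ along the relevant characteristics near $t=0$; this is where assumption \eqref{positivity} is consumed. I would choose the free constants in $\varphi$ so that $\varphi(x,0)$ is strictly larger than $\varphi(x,t_1)$ uniformly on $\overline\Omega$, which lets the $t=t_1$ endpoint term be absorbed into the left-hand side for $s$ large, while the $t=0$ term is harmless (it only helps, or is bounded by $\|u(\cdot,0)\|_{L^2}$ which we will recirculate).

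From the Carleman estimate one extracts, after fixing $s$ large and using that $e^{2s\varphi}$ is bounded above and below by positive constants on compact sub-slabs, an estimate of the type $\|u(\cdot,t^*)\|_{L^2(\Omega)}\le C\|g\|_{L^2(\Sigma)}$ for some (or a range of) intermediate time(s) $t^*\in(0,t_1)$ — essentially an observability at one time from the lateral data. Then the energy estimate (Lemma \ref{energy} and Proposition \ref{continuity}) does the rest: because $A$ is a first-order transport operator, $\frac{d}{dt}\|u(\cdot,t)\|_{L^2(\Omega)}^2$ is controlled by a boundary flux term $\int_{\partial\Omega} |H(t)\cdot\nu|\,|u|^2\,d\sigma$ plus a term from $\operatorname{div}H(t)=0$ (here $H$ depends only on $t$, so there is no zeroth-order contribution at all), which gives, via Grönwall, $\|u(\cdot,t)\|_{L^2(\Omega)}\le C\big(\|u(\cdot,t^*)\|_{L^2(\Omega)} + \|g\|_{L^2(\Sigma)}\big)$ for every $t\in[0,T]$, in both time directions. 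Combining this with the one-time bound yields \eqref{observ_ineq} for all $t\in[0,T]$, with $C$ independent of $g$.

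The main obstacle I anticipate is the careful handling of the endpoint-in-time terms in the Carleman estimate without a temporal cut-off — i.e., making the synthetic argument of \cite{Huang2020} go through in the degenerate setting. Concretely: verifying that the geometric condition $T_0<t_1$ is precisely what is needed so that the weight $\varphi$ satisfies the sub-ellipticity (pseudoconvexity) condition for $A$ on the whole slab $\Omega\times(0,t_1)$, and simultaneously that $\varphi(\cdot,0)>\varphi(\cdot,t_1)$ on $\overline\Omega$ so the terminal term is absorbable. The degeneracy $H(0)=0$ means one must be attentive that the first-order Carleman terms do not lose their sign near $t=0$; this is exactly why \eqref{positivity} (a lower bound on $|H'(t)|$ on $[0,T_1]$, together with a controlled direction on $[0,t_1]$) enters, and tracking the constant $\delta=\rho(2c_0^2-1)\dist(x_0,\Omega)$ through these estimates is the delicate bookkeeping. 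Once that is in place, passing from the weighted $L^2$-in-$Q$ bound to a pointwise-in-$t$ bound via the energy identity is routine.
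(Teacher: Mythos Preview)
Your plan is essentially the paper's: Carleman estimate with weight $\varphi(x,t)=|x-x_0|^2-\beta t^2$ on a slab of length $t_1$, endpoint-in-time terms kept rather than cut off (the device from \cite{Huang2020}), the hypothesis $T_0<t_1$ used so that $d_M-\beta t_1^2<d_m$ and the far-in-time endpoint can be absorbed, and then Lemma~\ref{energy}/Proposition~\ref{continuity} to pass from an estimate near $t=0$ to all of $[0,T]$.

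The one place where the paper differs from your sketch is the handling of the $t=0$ endpoint. The paper does \emph{not} work on $(0,t_1)$: it first extends $u$ evenly and $H$ oddly to $(-T,T)$ (the degeneracy $H(0)=0$ is exactly what makes $\bar H\in C^1$ across $t=0$), and then applies Proposition~\ref{Carleman} on the symmetric slab $(-t_1,t_1)$. Both endpoint weights $e^{2s\varphi(\cdot,\pm t_1)}$ are then equally small, while the large-weight region $|t|<\epsilon$ sits in the interior and furnishes the lower bound. Your alternative of staying on $(0,t_1)$ does work, but only for a reason you half-state: in the Carleman identity the $t=0$ boundary term carries the factor $A\varphi(x,0)=-2\beta\cdot 0+2H(0)\cdot(x-x_0)=0$, so it vanishes identically. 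Your fallback of ``recirculating'' a surviving contribution $Cse^{2s\varphi(\cdot,0)}\|u(\cdot,0)\|_{L^2(\Omega)}^2$ into the left-hand side would \emph{not} succeed, since its weight $e^{2sd_M}$ strictly dominates the left-hand weight $e^{2s(d_m-\beta\epsilon^2)}$; it is the exact vanishing forced by $H(0)=0$, not any absorption, that makes the one-sided argument go through.
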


\section{Preliminaries}
In this section, we prepare some results needed to prove Theorem \ref{observability}. In section 2.1, by the energy estimate Lemma \ref{energy} we prove Proposition \ref{continuity}, which means if the observability inequality \eqref{observ_ineq} holds locally in time, then it holds also globally in time. In section 2.2, we present the Carleman estimate in Proposition \ref{Carleman}.

\subsection{Energy estimate}
For the proof of Theorem \ref{observability}, we use the energy estimate of the following type, which is proved without assuming \eqref{degenerate} and \eqref{positivity}.
\begin{lem}\label{energy}
Let  $T>0$, $H\in C([0,T];\mathbb{R}^d)$, and $g\in L^2(\Sigma)$. Then, there exists a constant $C>0$ independent of $g\in L^2(\Sigma)$ such that for all $t\in[0,T]$,
\[\left|\|u(\cdot,t)\|_{L^2(\Omega)}^2-\|u(\cdot,0)\|_{L^2(\Omega)}^2\right|\le C\|g\|_{L^2(\Sigma)}^2\]
holds for all $u\in H^1(Q)$ satisfying \eqref{boundary}.
\end{lem}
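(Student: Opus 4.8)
The plan is to derive the identity by multiplying the equation $Au = \partial_t u + H(t)\cdot\nabla u = 0$ by $u$ and integrating over $\Omega\times(0,t)$, then to integrate by parts in both $t$ and $x$. First I would write $0 = \int_0^t\int_\Omega (\partial_s u + H(s)\cdot\nabla u)\,u\,dx\,ds$. The first term gives $\tfrac12\int_\Omega (u(\cdot,t)^2 - u(\cdot,0)^2)\,dx$ after integrating in $s$. For the second term, using $(H(s)\cdot\nabla u)\,u = \tfrac12\,H(s)\cdot\nabla(u^2) = \tfrac12\,\mathrm{div}\big(u^2 H(s)\big)$ (valid since $H$ depends only on $s$, so $\mathrm{div}_x H(s) = 0$), the divergence theorem converts it to a boundary integral $\tfrac12\int_0^t\int_{\partial\Omega} u^2\,(H(s)\cdot\nu)\,d\sigma\,ds$, where on $\Sigma$ we have $u = g$.

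Combining these, I obtain the exact identity
\begin{equation}\label{eq:energy-identity}
\|u(\cdot,t)\|_{L^2(\Omega)}^2 - \|u(\cdot,0)\|_{L^2(\Omega)}^2 = -\int_0^t\int_{\partial\Omega} g^2\,(H(s)\cdot\nu)\,d\sigma\,ds.
\end{equation}
Bounding the right-hand side: $|H(s)\cdot\nu(x)| \le |H(s)| \le \max_{s\in[0,T]}|H(s)| =: \|H\|_{C([0,T])}$, which is finite since $H\in C([0,T];\mathbb{R}^d)$. Hence the absolute value of the right side of \eqref{eq:energy-identity} is at most $\|H\|_{C([0,T])}\int_0^T\int_{\partial\Omega} g^2\,d\sigma\,ds = \|H\|_{C([0,T])}\,\|g\|_{L^2(\Sigma)}^2$, so the claim holds with $C := \|H\|_{C([0,T])}$, which is independent of $g$.

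The only delicate point is justifying the integration by parts for a mere $H^1(Q)$ solution: the boundary trace of $u$ on $\Sigma$ is well defined in $L^2(\Sigma)$ by the trace theorem, and $u^2 \in W^{1,1}(Q)$ with $\nabla(u^2) = 2u\nabla u \in L^1$, so the divergence theorem applies to $u^2 H(s)$; the time integration by parts is likewise valid since $t\mapsto \|u(\cdot,t)\|_{L^2(\Omega)}^2$ is absolutely continuous for $u\in H^1(Q)$. If one prefers to avoid any regularity subtlety, the standard device is to prove \eqref{eq:energy-identity} first for smooth $u$ and then pass to the limit using density of $C^\infty(\overline Q)$ in $H^1(Q)$ together with continuity of the trace map, but this refinement is routine and I would relegate it to a remark.
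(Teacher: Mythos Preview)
Your proof is correct and follows essentially the same approach as the paper: multiply the equation by $u$ (the paper uses $2u$), use that $H$ is independent of $x$ so that $(H\cdot\nabla u)u=\tfrac12\,\mathrm{div}(u^2H)$, apply the divergence theorem to obtain the exact energy identity, and then bound the boundary term by $\|H\|_{C([0,T])}\|g\|_{L^2(\Sigma)}^2$. The only cosmetic difference is that the paper first derives the differential identity $\tfrac{d}{dt}\|u(\cdot,t)\|_{L^2(\Omega)}^2=-\int_{\partial\Omega}(H(t)\cdot\nu)|g|^2\,d\sigma$ and then integrates in $t$, whereas you integrate over $\Omega\times(0,t)$ directly; your added remarks on the explicit constant and on justifying the integration by parts for $H^1(Q)$ functions are welcome refinements.
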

\begin{proof}
Multiplying the equation in \eqref{boundary} by $2u$ and integrating over $\Omega$ yield
\[\int_\Omega\partial_t(|u|^2)dx+\int_\Omega H(t)\cdot\nabla (|u|^2)dx=0,\]
i.e.,
\[\frac{d}{dt}\left(\int_\Omega|u|^2dx\right)=-\int_{\partial\Omega}\left(H(t)\cdot\nu(x)\right)|g|^2d\sigma.\]
Integration over $[0,t]$ yields
\[\left|\|u(\cdot,t)\|_{L^2(\Omega)}^2-\|u(\cdot,0)\|_{L^2(\Omega)}^2\right|\le C\|g\|_{L^2(\Sigma)}^2,\]
for some $C>0$ independent of $g\in L^2(\Sigma)$, $t\in[0,T]$, and $u\in H^1(Q)$.
\end{proof}

\begin{prop}\label{continuity}
Let  $T>0$, $H\in C([0,T];\mathbb{R}^d)$, and $g\in L^2(\Sigma)$. Assume there exist $\tau\in[0,T]$ and a constant $C_1>0$ independent of $g\in L^2(\Sigma)$ such that for all $t\in[0,\tau]$,
\[\|u(\cdot,t)\|_{L^2(\Omega)}\le C_1\|g\|_{L^2(\Sigma)}\]
holds for all $u\in H^1(Q)$ satisfying \eqref{boundary}. Then, there exists a constant $C_2>0$ independent of $g\in L^2(\Sigma)$ such that 
\[\|u(\cdot,t)\|_{L^2(\Omega)}\le C_2\|g\|_{L^2(\Sigma)}\]
holds for all $t\in[0,T]$ and $u\in H^1(Q)$ satisfying \eqref{boundary}.
\end{prop}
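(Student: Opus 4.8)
The plan is to prove Proposition \ref{continuity} by a standard propagation / bootstrap argument: the hypothesis gives the observability estimate on the initial slab $[0,\tau]$, and the energy identity from Lemma \ref{energy} lets us transport the bound to later times by iterating over overlapping slabs of length $\tau$. Concretely, I would first dispose of the trivial case $\tau=0$ (if $\tau=0$ the hypothesis only controls $\|u(\cdot,0)\|_{L^2(\Omega)}$, which together with Lemma \ref{energy} already gives the conclusion for all $t$), and otherwise fix some $\tau>0$ with the given constant $C_1$.

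The key observation is that Lemma \ref{energy} is insensitive to the choice of initial time: although it is stated with the time origin $0$, the same computation (multiply $Au=0$ by $2u$, integrate over $\Omega\times[s,t]$) shows that for any $0\le s\le t\le T$,
\[
\bigl|\|u(\cdot,t)\|_{L^2(\Omega)}^2-\|u(\cdot,s)\|_{L^2(\Omega)}^2\bigr|\le C\|g\|_{L^2(\Sigma)}^2,
\]
with $C$ depending only on $T$ and $H$. In particular, from the hypothesis applied at $t=\tau$ we get $\|u(\cdot,\tau)\|_{L^2(\Omega)}^2\le C_1^2\|g\|_{L^2(\Sigma)}^2$, and then for every $t\in[\tau,\min\{2\tau,T\}]$,
\[
\|u(\cdot,t)\|_{L^2(\Omega)}^2\le \|u(\cdot,\tau)\|_{L^2(\Omega)}^2+C\|g\|_{L^2(\Sigma)}^2\le (C_1^2+C)\|g\|_{L^2(\Sigma)}^2.
\]
Iterating this $N:=\lceil T/\tau\rceil$ times — at each step using the already-established bound at the left endpoint of the current slab as the new "initial" data and adding one more increment of $C\|g\|_{L^2(\Sigma)}^2$ — yields $\|u(\cdot,t)\|_{L^2(\Omega)}^2\le (C_1^2+NC)\|g\|_{L^2(\Sigma)}^2$ for all $t\in[0,T]$. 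Setting $C_2:=\sqrt{C_1^2+NC}$, which depends only on $T$, $H$, $\tau$, and $C_1$ but not on $g$, finishes the proof.

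The only mild subtlety — not really an obstacle — is making sure the additive constant $C$ in the energy estimate can be taken uniform over all the subintervals and that $N$ is finite, which is immediate since $\tau>0$ is fixed and $C$ in Lemma \ref{energy} depends only on $\sup_{[0,T]}|H\cdot\nu|$ and the surface measure of $\partial\Omega$; no regularity or nondegeneracy of $H$ is needed here. I would state the generalized energy identity on $[s,t]$ as an inline remark (it is literally the proof of Lemma \ref{energy} with the interval shifted) rather than recording it as a separate lemma, then carry out the finite induction explicitly in one short paragraph.
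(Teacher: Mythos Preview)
Your proof is correct, but it is more elaborate than what the paper actually does. The paper observes that Lemma \ref{energy} \emph{already} compares $\|u(\cdot,t)\|_{L^2(\Omega)}^2$ with $\|u(\cdot,0)\|_{L^2(\Omega)}^2$ for \emph{every} $t\in[0,T]$ in a single step, with one fixed constant $C$. Since the hypothesis in particular gives $\|u(\cdot,0)\|_{L^2(\Omega)}\le C_1\|g\|_{L^2(\Sigma)}$, one immediately obtains
\[
\|u(\cdot,t)\|_{L^2(\Omega)}^2\le \|u(\cdot,0)\|_{L^2(\Omega)}^2+C\|g\|_{L^2(\Sigma)}^2\le (C_1^2+C)\|g\|_{L^2(\Sigma)}^2
\]
for all $t\in[0,T]$, and sets $C_2=\sqrt{C_1^2+C}$. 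No slab decomposition, no iteration, no shifted energy identity on $[s,t]$ is needed; the whole argument is two lines. Your bootstrap works, but it produces the larger constant $\sqrt{C_1^2+NC}$ with $N=\lceil T/\tau\rceil$ and introduces the extra (harmless) step of justifying the energy identity on subintervals. In effect you only use the hypothesis at the single time $t=\tau$ and then propagate, whereas the paper only uses it at $t=0$ and applies Lemma \ref{energy} once; either anchor point suffices, and the iteration is redundant.
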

\begin{proof}
The claim is trivial when $\tau=T$. When $\tau<T$, Lemma \ref{energy} and the assumption in Proposition \ref{continuity} yield
\begin{align*}\|u(\cdot,t)\|_{L^2(\Omega)}^2&\le \|u(\cdot,0)\|_{L^2(\Omega)}^2+C\|g\|_{L^2(\Sigma)}^2\\
&\le (C_1^2+C)\|g\|_{L^2(\Sigma)}^2\end{align*}
for all $t\in[0,T]$ and $u\in H^1(Q)$ satisfying \eqref{boundary}. If we set $C_2:=\sqrt{C_1^2+C}$, we complete the proof.
\end{proof}

\subsection{Carleman estimate}
Let $\tau>0$ and $c_0\in(\frac{1}{\sqrt{2}},1)$ be fixed constants. We set $Q_{\pm,\tau}:=\Omega\times(-\tau,\tau)$ and $\Sigma_{\pm,\tau}:=\partial\Omega\times(-\tau,\tau)$. In this section, we establish the Carleman estimate for the differential operator $A$,
\[Au:=\partial_tu+H(t)\cdot\nabla u\]
in $Q_{\pm,\tau}$ under the following assumptions:
\begin{equation}\label{A.1}H\in C^1([-\tau,\tau];\mathbb{R}^d);\end{equation}
\begin{equation}\label{A.2}\exists\rho>0\ \text{s.t.}\ \min_{t\in[-\tau,\tau]}|H'(t)|\ge\rho;\end{equation}
\begin{equation}\label{A.3}\exists\theta_0\in \mathbb{S}^{d-1}\ \text{s.t.}\ \min_{t\in[-\tau,\tau]}\frac{H'(t)\cdot\theta_0}{|H'(t)|}\ge c_0,\end{equation}
where $\mathbb{S}^{d-1}:=\{\xi\in\mathbb{R}^d\mid|\xi|=1\}$.

Under the assumptions \eqref{A.1}--\eqref{A.3}, we will obtain the Carleman estimate for $A$ in $Q_{\pm,\tau}$.

We can take $x_0\in\overline{\Omega}^c:=\mathbb{R}^d\setminus\overline{\Omega}$ satisfying $\displaystyle\min_{(x,t)\in\overline{Q_{\pm,\tau}}}\frac{H'(t)\cdot(x-x_0)}{|H'(t)||x-x_0|}\ge 2c_0^2-1$ by the same argument as in the proof of Lemma \ref{angle_lem}.

For a positive constant $\beta>0$ to be fixed later, we set
\begin{equation}\label{weight}\varphi(x,t):=|x-x_0|^2-\beta t^2,\quad (x,t)\in \overline{Q_{\pm,\tau}}.\end{equation}
We establish the Carleman estimate Proposition \ref{Carleman} for the operator $A$ having time-dependent coefficients. Nevertheless, our choice of weight functions is more similar to the one by Klibanov--Pamyatnykh \cite{Klibanov2008} and Gaitan--Ouzzane \cite{Gaitan2014} than by Cannarsa--Floridia--Yamamoto \cite{Cannarsa2019}.

\begin{prop}\label{Carleman}
Assume \eqref{A.1}, \eqref{A.2}, and \eqref{A.3}. Let $\varphi$ be the smooth function defined by \eqref{weight}, where $\beta>0$ is an arbitrary positive number satisfying
\[0<\beta<\delta:=\rho(2c_0^2-1)\dist(x_0,\Omega).\]
Then, there exists a constant $C>0$ such that
\begin{align}\label{estimate}
&s\int_{Q_{\pm,\tau}} e^{2s\varphi}|u|^2dxdt\\
&\quad\le C\int_{Q_{\pm,\tau}} e^{2s\varphi}|Au|^2dxdt+Cs\int_{\Sigma_{\pm,\tau}} e^{2s\varphi}A\varphi(H(t)\cdot \nu(x))|u|^2d\sigma dt\notag\\
&\quad\quad+Cs\int_\Omega \left(e^{2s\varphi(x,\tau)}|u(x,\tau)|^2+e^{2s\varphi(x,-\tau)}|u(x,-\tau)|^2\right)dx\notag\end{align}
holds for all $s>0$ and $u\in H^1(Q_{\pm,\tau})$. Here $d\sigma$ denotes the volume element of $\partial\Omega$.
\end{prop}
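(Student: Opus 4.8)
The plan is to derive the Carleman estimate by the classical conjugation-and-pointwise-identity method adapted to the first-order operator $A$. First I would introduce $w:=e^{s\varphi}u$ and compute the conjugated operator $P_sw:=e^{s\varphi}A(e^{-s\varphi}w)=Aw-s(A\varphi)w$. Split $P_sw$ into a "principal" part $P_1w:=Aw=\partial_tw+H(t)\cdot\nabla w$ and the lower-order part $P_2w:=-s(A\varphi)w$, so that $\|P_sw\|_{L^2(Q_{\pm,\tau})}^2=\|P_1w\|^2+\|P_2w\|^2+2(P_1w,P_2w)$, and the game is to show that the cross term $2(P_1w,P_2w)$ dominates $s\int e^{2s\varphi}|u|^2$ up to the boundary and time-slice terms appearing on the right-hand side of \eqref{estimate}.

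The core computation is the cross term $2(P_1w,P_2w)_{L^2(Q_{\pm,\tau})}=-2s\int_{Q_{\pm,\tau}}(A\varphi)\,w\,(Aw)\,dxdt$. Writing $w(Aw)=\tfrac12 A(w^2)$ and integrating by parts in $(x,t)$ moves the derivative onto $A\varphi$, producing the bulk term $s\int_{Q_{\pm,\tau}}A(A\varphi)\,|w|^2\,dxdt$ plus boundary contributions on $\Sigma_{\pm,\tau}$ (from the $H\cdot\nabla$ part, giving the factor $A\varphi\,(H\cdot\nu)$) and on the time slices $t=\pm\tau$ (from the $\partial_t$ part). For the weight $\varphi(x,t)=|x-x_0|^2-\beta t^2$ one has $\nabla\varphi=2(x-x_0)$, $\partial_t\varphi=-2\beta t$, hence $A\varphi=-2\beta t+2H(t)\cdot(x-x_0)$, and then $A(A\varphi)=\partial_t(A\varphi)+H\cdot\nabla(A\varphi)=-2\beta+2H'(t)\cdot(x-x_0)+2|H(t)|^2$. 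The key positivity step is to show $A(A\varphi)\ge 2(\delta-\beta)>0$ on $\overline{Q_{\pm,\tau}}$: indeed $H'(t)\cdot(x-x_0)\ge|H'(t)|\,|x-x_0|(2c_0^2-1)\ge\rho\,\mathrm{dist}(x_0,\Omega)(2c_0^2-1)=\delta$ by assumption \eqref{A.3} (via the $x_0$ chosen as in Lemma \ref{angle_lem}) and \eqref{A.2}, while $|H(t)|^2\ge0$, so $A(A\varphi)\ge 2\delta-2\beta$, which is strictly positive precisely because $0<\beta<\delta$. This yields $2(P_1w,P_2w)\ge 4(\delta-\beta)s\int_{Q_{\pm,\tau}}|w|^2dxdt+(\text{boundary and time-slice terms})$.

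Putting this together, $\|P_1w\|^2+\|P_2w\|^2\ge 0$ gives $\|P_sw\|^2\ge 4(\delta-\beta)s\int_{Q_{\pm,\tau}}|w|^2-|\text{boundary terms}|-|\text{time-slice terms}|$; rearranging and using $\|P_sw\|^2=\int e^{2s\varphi}|Au|^2$ together with $|w|^2=e^{2s\varphi}|u|^2$, and absorbing the explicit signs of the time-slice terms into their absolute values (bounding $e^{2s\varphi(x,\pm\tau)}|u(x,\pm\tau)|^2$ directly), I obtain \eqref{estimate} with $C=1/(4(\delta-\beta))$ up to adjusting constants. Finally I would note that the estimate is first derived for smooth $u$ and extended to $u\in H^1(Q_{\pm,\tau})$ by density, since both sides are continuous in the $H^1$ topology (the trace terms on $\Sigma_{\pm,\tau}$ and on the time slices $t=\pm\tau$ make sense for $H^1(Q_{\pm,\tau})$ functions).

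The main obstacle I anticipate is bookkeeping of the boundary and time-slice terms with the correct signs: the integration by parts in $t$ over $(-\tau,\tau)$ produces $\big[s\int_\Omega(A\varphi)|w|^2dx\big]_{t=-\tau}^{t=\tau}$, which is not sign-definite, so one must keep it as a genuine time-slice contribution on the right-hand side (as \eqref{estimate} does, with both endpoints appearing with absolute values) rather than hoping to drop it. Likewise the lateral term must be organized to display exactly the factor $A\varphi\,(H(t)\cdot\nu(x))$ as written. A secondary point is checking that the chosen $x_0$ indeed satisfies $\min_{\overline{Q_{\pm,\tau}}}\frac{H'(t)\cdot(x-x_0)}{|H'(t)||x-x_0|}\ge 2c_0^2-1$ — but this is exactly the content of the argument already given for Lemma \ref{angle_lem}, applied verbatim with $[0,t_1]$ replaced by $[-\tau,\tau]$, so it requires no new idea.
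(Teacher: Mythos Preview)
Your proposal is correct and follows essentially the same argument as the paper: conjugate via $w=e^{s\varphi}u$, use $\|P_sw\|^2\ge 2(Aw,-s(A\varphi)w)$, integrate by parts to produce the bulk term $s\int A(A\varphi)\,|w|^2$, and invoke $A(A\varphi)\ge 2(\delta-\beta)>0$ from the lower bound $H'(t)\cdot(x-x_0)\ge\delta$ together with $|H(t)|^2\ge 0$. The only slip is the constant $4(\delta-\beta)$ in your cross-term lower bound, which should be $2(\delta-\beta)$ (the factor $2$ from $2(P_1w,P_2w)$ is already consumed when you write $w\,Aw=\tfrac12 A(w^2)$); this is harmless since it is absorbed into $C$.
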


\begin{proof}
Set $z:=e^{s\varphi}u$ and $Pz:=e^{s\varphi}A(e^{-s\varphi}z)$ for $u\in H^1(Q_{\pm,\tau})$ and $s>0$. Since $\varphi$ is smooth, it follows that $z\in H^1(Q_{\pm,\tau})$. We note that
\[A\varphi(x,t)=-2\beta t+2H(t)\cdot (x-x_0)\]
and
\[A^2\varphi=-2\beta+2H'(t)\cdot (x-x_0)+2|H(t)|^2.\]
Since we have
\[Pz=Az-s(A\varphi)z,\]
\begin{align}\label{inner_prod}&\|Pz\|_{L^2(Q_{\pm,\tau})}^2\ge 2(Az,-s(A\varphi)z)_{L^2(Q_{\pm,\tau})}\\
&\quad=-2s\int_{Q_{\pm,\tau}}(\partial_tz+H(t)\cdot\nabla z)(A\varphi)zdxdt\notag\\
&\quad=-s\int_{Q_{\pm,\tau}}(A\varphi)\partial_t(|z|^2)dxdt-s\int_{Q_{\pm,\tau}}(A\varphi)H(t)\cdot\nabla(|z|^2)dxdt\notag\\
&\quad=s\int_{Q_{\pm,\tau}} A^2\varphi |z|^2dxdt-s\int_{\Sigma_{\pm,\tau}} A\varphi(H(t)\cdot\nu(x))|z|^2d\sigma dt\notag\\
&\quad\quad-s\int_\Omega\Big[A\varphi|z|^2\Big]_{t=-\tau}^{t=\tau}dx\notag\\
&\quad\ge 2s\int_{Q_{\pm,\tau}}\Big(-\beta+H'(t)\cdot (x-x_0)\Big)|z|^2dxdt\notag\\
&\quad\quad-s\int_{\Sigma_{\pm,\tau}} (A\varphi)(H(t)\cdot\nu(x))|z|^2d\sigma dt-s\int_\Omega\Big[A\varphi|z|^2\Big]_{t=-\tau}^{t=\tau}dx\notag\end{align}
holds. For the fixed $x_0\in\overline{\Omega}^c$ so that $\displaystyle\min_{(x,t)\in\overline{Q_{\pm,\tau}}}\frac{H'(t)\cdot(x-x_0)}{|H'(t)||x-x_0|}\ge 2c_0^2-1(>0)$, it follows that
\begin{align*}H'(t)\cdot(x-x_0)&=|H'(t)||x-x_0|\frac{H'(t)\cdot (x-x_0)}{|H'(t)||x-x_0|}\\
&\ge\rho\dist(x_0,\Omega)\min_{(x,t)\in\overline{Q_{\pm,\tau}}}\frac{H'(t)\cdot (x-x_0)}{|H'(t)||x-x_0|}\\
&\ge\delta(>0)\end{align*}
for all $(x,t)\in \overline{Q_{\pm,\tau}}$ owing to \eqref{A.2} and \eqref{A.3}. We then obtain from \eqref{inner_prod}
\begin{align*}\|Pz\|_{L^2(Q_{\pm,\tau})}^2&\ge 2(\delta-\beta)s\int_{Q_{\pm,\tau}}|z|^2dxdt-s\int_{\Sigma_{\pm,\tau}} (A\varphi)(H(t)\cdot\nu(x))|z|^2d\sigma dt\\
&\quad-s\int_\Omega\Big[A\varphi|z|^2\Big]_{t=-\tau}^{t=\tau}dx.\end{align*}
Hence, for all $0<\beta<\delta$, there exists a constant $C>0$ such that
\begin{align*}&s\int_{Q_{\pm,\tau}} e^{2s\varphi}|u|^2dxdt\\
&\quad\le C\int_{Q_{\pm,\tau}} e^{2s\varphi}|Au|^2dxdt+Cs\int_{\Sigma_{\pm,\tau}} e^{2s\varphi}A\varphi(H(t)\cdot \nu(x))|u|^2d\sigma dt\\
&\quad\quad+Cs\int_\Omega \left(e^{2s\varphi(x,\tau)}|u(x,\tau)|^2+e^{2s\varphi(x,-\tau)}|u(x,-\tau)|^2\right)dx\end{align*}
holds for all $s>0$ and $u\in H^1(Q_{\pm,\tau})$.
\end{proof}

\begin{remark}
In Proposition \ref{Carleman}, we do not assume the positivity of $|H(t)|$. In that respect, Proposition \ref{Carleman} is different from Theorem 1.5 in Cannarsa--Floridia--Yamamoto \cite{Cannarsa2019}. Proposition \ref{Carleman} says the Carleman estimate holds regardless of whatever $|H(t)|$ is positive if we assume appropriate properties in regard to $H'$.

The technical difference appears in the estimate \eqref{inner_prod}. In the non-degenerate case (e.g., \cite{Cannarsa2019} and Proposition \ref{Carleman'} in this paper), we can use the positivity of $A\varphi$. However, in the degenerate case, we use the positivity of $A^2\varphi$.

\end{remark}

\section{Proof of Theorem \ref{observability}}
To prove the main result, we use not only Lemma \ref{energy} and Proposition $\ref{continuity}$ but also Proposition \ref{Carleman}, i.e., the Carleman estimate for the operator $A$. Furthermore, we should describe a technical remark in applying Carleman estimates. In existing works, whenever we applied Carleman estimates to obtain stability estimates for some inverse problems, we introduced appropriate cut-off functions $\chi$ and applied Carleman estimates to $\chi u$, where $u$ is a solution to considering equations. This was because $\chi u$ vanished on boundaries of considering domains. However, in our proof of Theorem \ref{observability}, we need not use the cut-off arguments because our Carleman estimate in Proposition \ref{Carleman} contains all the boundary terms on $\partial Q_{\pm,\tau}$. This argument without cut-off functions is presented by Huang, Imanuvilov, and Yamamoto \cite{Huang2020}.

\begin{proof}[Proof of Theorem \ref{observability}]
In the beginning, we extend $H\in C([0,T];\mathbb{R}^d)$ and $u\in H^1(Q)$ satisfying \eqref{boundary} in $Q_\pm:=\Omega\times(-T,T)$ by setting
\[\bar{H}(t)=\begin{cases}H(t),\quad &t\in[0,T],\\
-H(-t),\quad &t\in[-T,0],\end{cases}\]
and
\[u(x,t)=\begin{cases}u(x,t)\quad &\text{in}\ \Omega\times(0,T),\\
u(x,-t)\quad &\text{in}\ \Omega\times(-T,0).\end{cases}\]
By our assumptions \eqref{degenerate} and \eqref{positivity}, $\bar{H}\in C([-T,T];\mathbb{R}^d)\cap C^1([-T_1,T_1];\mathbb{R}^d)$ and $u\in H^1(Q_\pm)$. Furthermore, the derivatives with respect to $t$ of $\bar{H}$ and $u$ satisfy
\[\bar{H}'(t)=\begin{cases}H'(t),\quad &t\in[0,T_1],\\
H'(-t),\quad &t\in[-T_1,0],\end{cases}\]
and
\[\partial_tu(x,t)=\begin{cases}\partial_tu(x,t)\quad &\text{in}\ \Omega\times(0,T),\\
-\partial_tu(x,-t)\quad &\text{in}\ \Omega\times(-T,0),\end{cases}\]
which imply $u$ satisfies
\begin{align}\label{boundary2}\begin{cases}Au=\partial_tu+\bar{H}(t)\cdot\nabla u=0\quad &\text{in}\ Q_\pm,\\
u=\bar{g}\quad &\text{on}\ \Sigma_\pm:=\partial\Omega\times(-T,T),\end{cases}\end{align}
where $\bar{g}$ is extended by
\begin{equation}\label{g}\bar{g}(x,t)=\begin{cases}g(x,t)\quad &\text{in}\ \partial\Omega\times(0,T),\\
g(x,-t)\quad &\text{in}\ \partial\Omega\times(-T,0).\end{cases}\end{equation}
Let $t_1>0$ be the positive number defined by \eqref{def} and $x_0\in\overline{\Omega}^c$ be the point satisfying \eqref{angle} under the assumption
\[T_0<t_1,\]
where $T_0$ is defined by \eqref{T_0}. Owing to Proposition \ref{continuity}, it suffices to prove the observability inequality \eqref{observ_ineq} in the interval $[0,t_1],$ then we can extend it to all the interval $[0,T]$.

For the fixed $x_0\in\overline{\Omega}^c$, we take $0<\beta<\delta$, where $\delta$ is defined by \eqref{delta}, satisfying
\[(T_0<)\sqrt{\frac{d_M-d_m}{\beta}}<t_1,\]
where we define
\[d_M:=\max_{x\in\overline{\Omega}}|x-x_0|^2,\quad d_m:=\min_{x\in\overline{\Omega}}|x-x_0|^2.\]
Then, there exists $\kappa>0$ such that
\begin{equation}\label{negative}d_M-d_m-\beta t_1^2<-\kappa.\end{equation}
Henceforth, by $C>0$ we denote a generic constant independent of $u$ and $\bar{g}$ which may change from line to line, unless specified otherwise. We find that $\bar{H}$ satisfies the assumptions \eqref{A.1}--\eqref{A.3} of section 2.2 by taking $\tau=t_1$ and $\theta_0=\frac{H'(0)}{|H'(0)|}$ needed for Proposition \ref{Carleman}. Set $Q_{\pm,t_1}:=\Omega\times(-t_1,t_1)$ and $\Sigma_{\pm,t_1}:=\partial\Omega\times(-t_1,t_1)$. Applying Proposition \ref{Carleman} to the extended $u\in H^1(Q_{\pm,t_1})$ satisfying \eqref{boundary2} yields
\begin{align}\label{C}&s\int_{Q_{\pm,t_1}} e^{2s\varphi}|u|^2dxdt\\
&\quad\le Cs\int_{\Sigma_{\pm,t_1}} e^{2s\varphi}A\varphi(\bar{H}(t)\cdot \nu(x))|u|^2d\sigma dt\notag\\
&\quad\quad+Cs\int_\Omega \left(e^{2s\varphi(x,t_1)}|u(x,t_1)|^2+e^{2s\varphi(x,-t_1)}|u(x,-t_1)|^2\right)dx.\notag\end{align}
On the left-hand side of \eqref{C}, we obtain
\begin{equation}\label{sharp}s\int_{Q_{\pm,t_1}}e^{2s\varphi}|u|^2dxdt\ge se^{2s(d_m-\beta\epsilon^2)}\int_{-\epsilon}^\epsilon\int_\Omega |u|^2dxdt,\end{equation}
where $\epsilon\in(0,t_1)$ is an arbitrary small constant satisfying for all $x\in\overline{\Omega}$ and $|t|\le\epsilon$,
\[\varphi(x,t)>0,\]
i.e.,
\begin{equation}\label{min}d_m-\beta\epsilon^2> 0.\end{equation}
Furthermore, keeping in mind that $u$ is the even extension, applying Lemma \ref{energy} in \eqref{sharp}, we have
\begin{align}\label{lhs}s\int_{Q_{\pm,t_1}} e^{2s\varphi}|u|^2dxdt&\ge 2se^{2s(d_m-\beta\epsilon^2)}\int_0^\epsilon\int_\Omega |u|^2dxdt\\
&\ge 2\epsilon se^{2s(d_m-\beta\epsilon^2)}\left(\|u(\cdot,0)\|_{L^2(\Omega)}^2-C\|g\|_{L^2(\Sigma)}^2\right).\notag\end{align}
Moreover, in regard to the second summand of the right-hand side of \eqref{C}, applying Lemma \ref{energy} yields
\begin{align}\label{rhs}&Cs\int_\Omega \left(e^{2s\varphi(x,t_1)}|u(x,t_1)|^2+e^{2s\varphi(x,-t_1)}|u(x,-t_1)|^2\right)dx\\
&\quad\le Cse^{2s(d_M-\beta t_1^2)}\left(\|u(\cdot,t_1)\|_{L^2(\Omega)}^2+\|u(\cdot,-t_1)\|_{L^2(\Omega)}^2\right)\notag\\
&\quad\le 2Cse^{2s(d_M-\beta t_1^2)}\left(\|u(\cdot,0)\|_{L^2(\Omega)}^2+C\|g\|_{L^2(\Sigma)}^2\right).\notag\end{align}
From \eqref{C}, \eqref{lhs}, and \eqref{rhs}, keeping in mind \eqref{g}, we obtain
\begin{align*}&2\epsilon s e^{2s(d_m-\beta\epsilon^2)}\left(\|u(\cdot,0)\|_{L^2(\Omega)}^2-C\|g\|_{L^2(\Sigma)}^2\right)\\
&\quad\le Cse^{2s(d_M-\beta t_1^2)}\left(\|u(\cdot,0)\|_{L^2(\Omega)}^2+C\|g\|_{L^2(\Sigma)}^2\right)+Cse^{Cs}\|g\|_{L^2(\Sigma)}^2,\end{align*}
i.e.,
\[e^{2s(d_m-\beta\epsilon^2)}\left(2\epsilon-Ce^{2s(d_M-d_m-\beta t_1^2+\beta\epsilon^2)}\right)\|u(\cdot,0)\|_{L^2(\Omega)}^2\le Ce^{Cs}\|g\|_{L^2(\Sigma)}^2.\]
Applying \eqref{negative} and \eqref{min} to the left-hand side of the above inequality yields
\[\left(2\epsilon-Ce^{-2s(\kappa-\beta\epsilon^2)}\right)\|u(\cdot,0)\|_{L^2(\Omega)}^2\le Ce^{Cs}\|g\|_{L^2(\Sigma)}^2.\]
By choosing $s>0$ large enough to satisfy $2\epsilon-Ce^{-2s(\kappa-\beta\epsilon^2)}>0$ for the sufficiently small $\epsilon>0$ and applying Lemma \ref{energy} for \eqref{boundary2} again on the left-hand side of the above inequality, we have
\[\|u(\cdot,t)\|_{L^2(\Omega)}^2\le C\|g\|_{L^2(\Sigma)}^2\]
for all $t\in[0,t_1]$.
\end{proof}

\begin{remark}
In Theorem \ref{observability}, the degenerate point $t_*\in[0,T]$ on which $H(t_*)=0$ could be not necessarily equal to $0$. Indeed, by similar arguments to Lemma \ref{energy} and Proposition \ref{continuity}, it suffices to prove the observability inequality in a closed time interval containing $t_*$. Therefore, if there exists a sufficiently long time interval containing $t_*$ on which $\frac{H'(t)}{|H'(t)|}\cdot\frac{H'(t_*)}{|H'(t_*)|}\ge c_0$ holds, we can prove the observability inequality on the time interval by the same way as in the proof of Theorem \ref{observability} using the extension.
\end{remark}

\section{Non-degenerate transport equations}
In this section, we prove the observability inequality for the non-degenerate case studied by Cannarsa, Floridia, and Yamamoto \cite{Cannarsa2019a} without the partition arguments and cut-off arguments. Given $T>0$, we replace the assumption \eqref{degenerate} and \eqref{positivity} on $H\in C([0,T];\mathbb{R}^d)$ with the following:

\begin{equation}\label{positivity'}\exists T_1'\in(0,T],\ \exists\rho>0\ \text{s.t.}\ \min_{t\in[0,T_1']}|H(t)|\ge\rho.\end{equation}

\subsection{Preliminaries}
Our methodology is based on the energy estimate given in Proposition \ref{continuity}, which still holds for the non-degenerate case. We define a positive number corresponding to $t_1$ in Definition \ref{t_1}.
\begin{dfn}\label{t_1'}
Let $T>0$, $c_0\in(\frac{1}{\sqrt{2}},1)$, and $H\in C([0,T];\mathbb{R}^d)$ be a vector-valued function satisfying \eqref{positivity'}. We define a positive number $t_1'\in(0,T_1']$ such that
\begin{equation}\label{def'}t_1':=\sup\left\{\tau\in[0,T_1']\ \middle|\ \frac{H(t)}{|H(t)|}\cdot\frac{H(0)}{|H(0)|}\ge c_0,\quad \forall t\in[0,\tau]\right\}.\end{equation}
\end{dfn}

\begin{lem}\label{angle_lem'}
Let $T>0$, $c_0\in(\frac{1}{\sqrt{2}},1)$, $H\in C([0,T];\mathbb{R}^d)$ be a vector-valued function satisfying \eqref{positivity'}, and $t_1'\in(0,T_1']$ be the positive number defined by \eqref{def'}. Then, there exists $x_0\in\overline{\Omega}^c:=\mathbb{R}^d\setminus\overline{\Omega}$ such that
\begin{equation}\label{angle'}\min_{(x,t)\in\overline{\Omega}\times[0,t_1']}\frac{H(t)\cdot(x-x_0)}{|H(t)||x-x_0|}\ge 2c_0^2-1(>0).\end{equation}
\end{lem}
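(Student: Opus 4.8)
Here is how I would prove Lemma~\ref{angle_lem'}.

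The plan is to transcribe the proof of Lemma~\ref{angle_lem} almost verbatim, replacing $H'$ by $H$ and $t_1$ by $t_1'$ throughout: assumption \eqref{positivity'} here plays exactly the role that \eqref{positivity} played there, and the definition \eqref{def'} of $t_1'$ is the exact analogue of \eqref{def}, so the same geometric construction should work without modification.

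First I would note that $\theta_0:=\frac{H(0)}{|H(0)|}$ is a well-defined element of $\mathbb{S}^{d-1}$, since $|H(0)|\ge\rho>0$ by \eqref{positivity'}. Choosing the base point $x_0:=-R\theta_0$ with $R>\frac{1+c_0}{1-c_0}\diam\Omega$, the same elementary estimate as in Lemma~\ref{angle_lem}, namely
\[
(x-x_0)\cdot\theta_0=x\cdot\theta_0+R\ge R-|x|\ge R-\diam\Omega>c_0(R+\diam\Omega)\ge c_0|x-x_0|\qquad(x\in\overline\Omega),
\]
shows both that $x_0\in\overline\Omega^c$ (as $|x_0|=R>\diam\Omega\ge|x|$) and that $\frac{x-x_0}{|x-x_0|}\cdot\theta_0\ge c_0$ uniformly for $x\in\overline\Omega$. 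On the other hand, by the very definition \eqref{def'} of $t_1'$ one has $\frac{H(t)}{|H(t)|}\cdot\theta_0\ge c_0$ for every $t\in[0,t_1']$. Combining the two via the trigonometric addition formula — the angle between $\frac{x-x_0}{|x-x_0|}$ and $\theta_0$ and the angle between $\frac{H(t)}{|H(t)|}$ and $\theta_0$ each lie in $[0,\arccos c_0]\subset[0,\frac{\pi}{4}]$, so their sum stays below $\frac{\pi}{2}$ — gives
\[
\frac{H(t)\cdot(x-x_0)}{|H(t)||x-x_0|}\ge c_0\cdot c_0-\sqrt{1-c_0^2}\,\sqrt{1-c_0^2}=2c_0^2-1>0,
\]
and taking the minimum over $(x,t)\in\overline\Omega\times[0,t_1']$ yields \eqref{angle'}.

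I do not expect any genuine obstacle, since this is essentially a relabeling of an argument already carried out. The only two points deserving a word of care are that $\theta_0$ is well-defined — this is precisely where the non-degeneracy hypothesis \eqref{positivity'} enters, in place of \eqref{degenerate}--\eqref{positivity} — and that the composition-of-angles step genuinely requires $c_0>\frac{1}{\sqrt{2}}$, so that $2\arccos c_0<\frac{\pi}{2}$ and the cosine addition formula applies without sign ambiguity; for $c_0$ near $1$ the resulting constant $2c_0^2-1$ is close to $1$, as expected.
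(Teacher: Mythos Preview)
Your proposal is correct and follows essentially the same approach as the paper: the paper's proof constructs the identical point $x_0=-R\theta_0$ with $\theta_0=\frac{H(0)}{|H(0)|}$ and $R>\frac{1+c_0}{1-c_0}\diam\Omega$, derives the same chain of inequalities for $(x-x_0)\cdot\theta_0$, and then simply refers back to Lemma~\ref{angle_lem} for the angle-combination step that you spell out explicitly.
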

\begin{proof}
If we take $x_0:=-R\theta_0\in\overline{\Omega}^c$ for $R>\frac{1+c_0}{1-c_0}\diam\Omega$ and $\theta_0:=\frac{H(0)}{|H(0)|}$, we find
\begin{align*}(x-x_0)\cdot\theta_0&=x\cdot\theta_0+R\ge R-|x|\ge R-\diam\Omega\\
&>c_0(R+\diam\Omega)\ge c_0|x-x_0|\end{align*}
holds for all $x\in\overline{\Omega}$, which implies $\displaystyle\min_{(x,t)\in\overline{\Omega}\times[0,t_1']}\frac{x-x_0}{|x-x_0|}\cdot\theta_0\ge c_0$. By the same argument as in the proof of Lemma \ref{angle_lem}, we find \eqref{angle'} holds true.
\end{proof}

One of the most important tools in our methodology is the Carleman estimate. Let $\tau>0$ and $c_0\in(\frac{1}{\sqrt{2}},1)$ be constants. We set $Q_\tau:=\Omega\times(0,\tau)$ and $\Sigma_\tau:=\partial\Omega\times(0,\tau)$. We assume \eqref{B.1}--\eqref{B.3} for the non-degenerate case instead of \eqref{A.1}--\eqref{A.3} for the degenerate case:

\begin{equation}\label{B.1} H\in C^1([0,\tau];\mathbb{R}^d);\end{equation}
\begin{equation}\label{B.2} \exists\rho>0\ \text{s.t.}\ \min_{t\in[0,\tau]}|H(t)|\ge\rho;\end{equation}
\begin{equation}\label{B.3} \exists\theta_0\in \mathbb{S}^{d-1}\ \text{s.t.}\ \min_{t\in[0,\tau]}\frac{H(t)\cdot\theta_0}{|H(t)|}\ge c_0.\end{equation}

In the non-degenerate case, we choose a different weight function from \eqref{weight}. For a constant $\beta>0$, let us define
\begin{equation}\label{weight'}\psi(x,t):=|x-x_0|^2-\beta t,\quad (x,t)\in \overline{Q_{\tau}},\end{equation}
where $x_0\in\overline{\Omega}^c$ is a point satisfying $\displaystyle\min_{(x,t)\in\overline{Q_\tau}}\frac{H(t)\cdot (x-x_0)}{|H(t)||x-x_0|}\ge 2c_0^2-1$.

\begin{prop}\label{Carleman'}
Assume \eqref{B.1}, \eqref{B.2}, and \eqref{B.3}. Let $\psi$ be the smooth function defined by \eqref{weight'}, where $\beta>0$ is an arbitrary positive number satisfying
\[0<\beta<2\delta:=2\rho(2c_0^2-1)\dist(x_0,\Omega).\]
Then, there exist constants $s_*>0$ and $C>0$ such that
\begin{align}\label{estimate'}&s^2\int_{Q_\tau}e^{2s\psi}|u|^2dxdt\\
&\le C\int_{Q_\tau} e^{2s\psi}|Au|^2dxdt+Cs\int_{\Sigma_\tau} e^{2s\psi}A\psi(H(t)\cdot \nu(x))|u|^2d\sigma dt\notag\\
&\quad\quad+Cs\int_\Omega e^{2s\psi(x,\tau)}|u(x,\tau)|^2dx\notag\end{align}
holds for all $s>s_*$ and $u\in H^1(Q_\tau)$. Here $d\sigma$ denotes the volume element of $\partial\Omega$.
\end{prop}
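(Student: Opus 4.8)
The plan is to mimic the proof of Proposition \ref{Carleman} almost verbatim, with the crucial difference that in the non-degenerate case one exploits the positivity of $A\psi$ directly rather than passing to the second commutator. First I would set $z:=e^{s\psi}u$ and $Pz:=e^{s\psi}A(e^{-s\psi}z)=Az-s(A\psi)z$, and compute the relevant derivatives of the weight: since $\psi(x,t)=|x-x_0|^2-\beta t$ we have
\[
A\psi=-\beta+2H(t)\cdot(x-x_0),\qquad
\partial_t(A\psi)+H(t)\cdot\nabla(A\psi)=A^2\psi=2H'(t)\cdot(x-x_0)+2|H(t)|^2.
\]
Using assumptions \eqref{B.2} and \eqref{B.3} together with the choice of $x_0$, one first checks that $H(t)\cdot(x-x_0)\ge\rho\,\mathrm{dist}(x_0,\Omega)(2c_0^2-1)=\delta>0$ on $\overline{Q_\tau}$, hence $A\psi\ge 2\delta-\beta>0$ pointwise once $0<\beta<2\delta$.

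Next I would expand $\|Pz\|_{L^2(Q_\tau)}^2\ge 2\big(Az,-s(A\psi)z\big)_{L^2(Q_\tau)}$ and integrate by parts in $x$ and $t$ exactly as in \eqref{inner_prod}, producing the interior term $s\int_{Q_\tau}A^2\psi\,|z|^2$, the lateral boundary term $-s\int_{\Sigma_\tau}A\psi\,(H\cdot\nu)|z|^2$, and the time-slice term $-s\int_\Omega[A\psi\,|z|^2]_{t=0}^{t=\tau}$. The essential new ingredient, compared with the degenerate case, is that we do \emph{not} simply discard a sign-indefinite contribution: the term $s\int_{Q_\tau}A^2\psi|z|^2$ is controlled from below using $|H(t)|\ge\rho$ (so $2|H(t)|^2\ge 2\rho^2$) and the bound on $H'$, but this only gives a term of order $s$. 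To upgrade to the order $s^2$ appearing in \eqref{estimate'}, I would additionally use the cross term: more carefully, one should keep a fraction of $\|Pz\|^2$ and also exploit $\|Pz\|^2\ge \tfrac12\|s(A\psi)z\|^2-\|Az\|^2$, which yields $s^2\int_{Q_\tau}(A\psi)^2|z|^2\le 2\|Pz\|^2+2\|Az\|^2$; since $(A\psi)^2\ge(2\delta-\beta)^2>0$, this produces the desired $s^2\int_{Q_\tau}e^{2s\psi}|u|^2$ on the left. The boundary and time-slice terms are then moved to the right-hand side and bounded above, absorbing the $+s\int_\Omega[A\psi|z|^2]_{t=0}$ slice (at $t=0$) into the right-hand side harmlessly since it has the favourable sign or is lower order; only the $t=\tau$ slice survives as the stated time-boundary term.

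Collecting the pieces, for $0<\beta<2\delta$ I would obtain, for $s$ larger than some threshold $s_*>0$ needed to absorb the lower-order ($O(s)$) leftover interior terms into the $O(s^2)$ principal term,
\[
s^2\int_{Q_\tau}e^{2s\psi}|u|^2\,dx\,dt\le C\|Pz\|_{L^2}^2+Cs\int_{\Sigma_\tau}e^{2s\psi}A\psi(H\cdot\nu)|u|^2\,d\sigma\,dt+Cs\int_\Omega e^{2s\psi(x,\tau)}|u(x,\tau)|^2\,dx,
\]
and since $\|Pz\|_{L^2}^2=\int_{Q_\tau}e^{2s\psi}|Au|^2$, this is exactly \eqref{estimate'}. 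The main obstacle I anticipate is bookkeeping the constants so that the lower-order interior remainder (coming from $A^2\psi$ and from the Cauchy--Schwarz splitting of $\|Pz\|^2$) is genuinely dominated by the $s^2$ term for $s>s_*$; this is where the threshold $s_*$ enters and is the only place the non-degenerate estimate is qualitatively stronger than Proposition \ref{Carleman}. Everything else — the integration by parts, the sign of $A\psi$, the treatment of boundary terms — is parallel to the degenerate argument already carried out above.
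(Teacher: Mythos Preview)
Your overall strategy is right and tracks the paper closely: set $z=e^{s\psi}u$, $Pz=Az-s(A\psi)z$, use $A\psi\ge 2\delta-\beta>0$ from \eqref{B.2}--\eqref{B.3}, integrate the cross term by parts, and drop the $t=0$ slice by sign. The one genuine gap is in how you propose to obtain the $s^2$ term. You first write $\|Pz\|^2\ge 2(Az,-s(A\psi)z)$, which throws away \emph{both} $\|Az\|^2$ and $s^2\|(A\psi)z\|^2$, and then try to recover the latter via the separate inequality $\|Pz\|^2\ge\tfrac12\|s(A\psi)z\|^2-\|Az\|^2$. But $\|Az\|^2$ is not a harmless remainder here: since $Az=Pz+s(A\psi)z$, one has $\|Az\|^2\le 2\|Pz\|^2+2s^2\|(A\psi)z\|^2$, so the negative $-\|Az\|^2$ term swallows the very $s^2$ contribution you are trying to isolate. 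No splitting of $\|Pz\|^2$ into fractions rescues this, because the circularity is built into $Az=Pz+s(A\psi)z$.

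The fix is exactly what the paper does and is actually simpler than your detour: in the expansion
\[
\|Pz\|^2=\|Az\|^2+s^2\|(A\psi)z\|^2-2s(Az,(A\psi)z),
\]
drop only the nonnegative term $\|Az\|^2$, keeping
\[
\|Pz\|^2\ge s^2\|(A\psi)z\|^2+2\big(Az,-s(A\psi)z\big).
\]
Now integrate the cross term by parts as you describe; this yields the interior term $s\int_{Q_\tau}A^2\psi\,|z|^2$ and the boundary pieces, while the $s^2\int_{Q_\tau}(A\psi)^2|z|^2$ term is already present with the correct sign. Since $(A\psi)^2\ge(2\delta-\beta)^2>0$ and $A^2\psi$ is bounded, the interior contribution is $[(2\delta-\beta)^2 s^2+O(s)]\int|z|^2$, and choosing $s>s_*$ absorbs the $O(s)$ remainder. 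With this single correction your argument goes through and coincides with the paper's proof.
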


Note that the order of $s$ on the left-hand side of \eqref{estimate'} is different from the one on the left-hand side of \eqref{estimate}.

\begin{proof}[Proof of Proposition \ref{Carleman'}]
Set $z:=e^{s\psi}u$ and $Pz:=e^{s\psi}A(e^{-s\psi}z)$ for $u\in H^1(Q_\tau)$ and $s>0$. Since $\psi$ is smooth, it follows that $z\in H^1(Q_\tau)$. We note that
\[A\psi(x,t)=-\beta+2H(t)\cdot (x-x_0)\]
and
\[A^2\psi=2H'(t)\cdot (x-x_0)+2|H(t)|^2.\]
Since we have
\[Pz=Az-s(A\psi)z,\]
\begin{align}\label{inner_prod'}\|Pz\|_{L^2(Q_\tau)}^2&\ge s^2\|(A\psi)z\|_{L^2(Q_\tau)}^2+2(Az,-s(A\psi)z)_{L^2(Q_\tau)}\\
&=s^2\int_{Q_\tau}\Big(-\beta+2H(t)\cdot (x-x_0)\Big)^2|z|^2dxdt\notag\\
&\quad-2s\int_{Q_\tau}(\partial_tz+H(t)\cdot\nabla z)(A\psi)zdxdt\notag\\
&=s^2\int_{Q_\tau}\Big(-\beta+2H(t)\cdot (x-x_0)\Big)^2|z|^2dxdt\notag\\
&\quad-s\int_{Q_\tau}(A\psi)\partial_t(|z|^2)dxdt-s\int_{Q_\tau}(A\psi)H(t)\cdot\nabla(|z|^2)dxdt\notag\\
&=\int_{Q_\tau}\left[s^2\Big(-\beta+2H(t)\cdot (x-x_0)\Big)^2+s(A^2\psi)\right]|z|^2dxdt\notag\\
&\quad-s\int_{\Sigma_\tau} A\psi(H(t)\cdot\nu(x))|z|^2d\sigma dt-s\int_\Omega\Big[A\psi|z|^2\Big]_{t=0}^{t=\tau}dx\notag\end{align}
holds. For the fixed $x_0\in\overline{\Omega}^c$ so that $\displaystyle\min_{(x,t)\in\overline{Q_\tau}}\frac{H(t)\cdot(x-x_0)}{|H(t)||x-x_0|}\ge 2c_0^2-1(>0)$, it follows that
\begin{align*}H(t)\cdot(x-x_0)&=|H(t)||x-x_0|\frac{H(t)\cdot (x-x_0)}{|H(t)||x-x_0|}\\
&\ge\rho\dist(x_0,\Omega)\min_{(x,t)\in\overline{Q_\tau}}\frac{H(t)\cdot (x-x_0)}{|H(t)||x-x_0|}\\
&\ge\delta(>0)\end{align*}
for all $(x,t)\in \overline{Q_\tau}$ owing to \eqref{B.2} and \eqref{B.3}. We then obtain from \eqref{inner_prod'}
\begin{align*}\|Pz\|_{L^2(Q_\tau)}^2&\ge \int_{Q_\tau}\Big[(2\delta-\beta)^2s^2+O(s)\Big]|z|^2dxdt-s\int_{\Sigma_\tau} (A\psi)(H(t)\cdot\nu(x))|z|^2d\sigma dt\\
&\quad-s\int_\Omega A\psi(x,\tau)|z(x,\tau)|^2dx\end{align*}
as $s\to+\infty$. Hence, for all $0<\beta<2\delta$, there exist constants $s_*>0$ and $C>0$ such that
\begin{align*}s^2\int_{Q_\tau}e^{2s\psi}|u|^2dxdt&\le C\int_{Q_\tau}e^{2s\psi}|Au|^2dxdt+Cs\int_{\Sigma_\tau} e^{2s\psi}A\psi(H(t)\cdot \nu(x))|u|^2d\sigma dt\\
&\quad+Cs\int_\Omega e^{2s\psi(x,\tau)}|u(x,\tau)|^2dx\end{align*}
holds for all $s>s_*$ and $u\in H^1(Q_\tau)$.
\end{proof}

\subsection{Observability inequality for the non-degenerate case}
For the fixed $x_0\in\overline{\Omega}^c$ satisfying \eqref{angle'}, We define a positive number
\begin{equation}\label{T_0'} T_0':=\frac{\displaystyle\max_{x\in\overline{\Omega}}|x-x_0|^2-\min_{x\in\overline{\Omega}}|x-x_0|^2}{\delta},\end{equation}
where
\begin{equation}\label{delta'}\delta:=\rho(2c_0^2-1)\dist(x_0,\Omega)>0.\end{equation}

\begin{thm}
Let $T>0$, $c_0\in(\frac{1}{\sqrt{2}},1)$, $H\in C([0,T];\mathbb{R}^d)$, and $g\in L^2(\Sigma)$. Assume \eqref{positivity'} and $H\in C^1([0,T_1'];\mathbb{R}^d)$. If the positive number $t_1'>0$ defined by \eqref{def'} satisfies $T_0'<t_1'$ for some $x_0\in\overline{\Omega}^c$ satisfying \eqref{angle'}, then there exists a constant $C>0$ independent of $g\in L^2(\Sigma)$ such that for all $t\in[0,T]$,
\begin{equation}\label{observ_ineq'}\|u(\cdot,t)\|_{L^2(\Omega)}\le C\|g\|_{L^2(\Sigma)}\end{equation}
holds for all $u\in H^1(Q)$ satisfying \eqref{boundary}.
\end{thm}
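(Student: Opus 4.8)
The plan is to mirror the proof of Theorem \ref{observability}, replacing the weight \eqref{weight} by the linear-in-time weight \eqref{weight'} and using the non-degenerate Carleman estimate in Proposition \ref{Carleman'} in place of Proposition \ref{Carleman}. Since the hypothesis \eqref{positivity'} together with $H\in C^1([0,T_1'];\mathbb{R}^d)$ gives $\min_{t\in[0,t_1']}|H(t)|\ge\rho$, and since $t_1'$ was defined in \eqref{def'} precisely so that $\frac{H(t)}{|H(t)|}\cdot\frac{H(0)}{|H(0)|}\ge c_0$ on $[0,t_1']$, Lemma \ref{angle_lem'} supplies a point $x_0\in\overline{\Omega}^c$ for which \eqref{B.3} holds on $[0,t_1']$ with $\theta_0=\frac{H(0)}{|H(0)|}$. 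Thus, on the cylinder $Q_{t_1'}=\Omega\times(0,t_1')$, the assumptions \eqref{B.1}--\eqref{B.3} needed for Proposition \ref{Carleman'} are all verified by taking $\tau=t_1'$. Unlike the degenerate case, no even-extension in time is required here, because the linear weight $\psi$ and the Carleman estimate \eqref{estimate'} are stated directly on the forward cylinder $Q_\tau$.

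By Proposition \ref{continuity} it suffices to establish \eqref{observ_ineq'} on the subinterval $[0,t_1']$, after which it propagates to $[0,T]$. First I would fix $\beta$ with $0<\beta<2\delta$ and, using the assumption $T_0'<t_1'$ with $T_0'=\frac{d_M-d_m}{\delta}$ (where $d_M:=\max_{x\in\overline{\Omega}}|x-x_0|^2$, $d_m:=\min_{x\in\overline{\Omega}}|x-x_0|^2$), arrange that $\sqrt{\cdot}$ is replaced by the simpler relation $d_M-d_m-\beta t_1'<-\kappa$ for some $\kappa>0$; this is possible because $\beta$ may be chosen arbitrarily close to $2\delta>\delta\ge\frac{d_M-d_m}{t_1'}$. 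Then I would apply \eqref{estimate'} to the solution $u$ of \eqref{boundary}, discarding the $|Au|^2$ term since $Au=0$, and bound the remaining two terms. On the left-hand side I would restrict the integral to $\Omega\times(0,\epsilon)$ for a small $\epsilon\in(0,t_1')$ with $d_m-\beta\epsilon>0$, so that $\psi\ge d_m-\beta\epsilon>0$ there, giving a lower bound of the form $s e^{2s(d_m-\beta\epsilon)}\int_0^\epsilon\|u(\cdot,t)\|_{L^2(\Omega)}^2\,dt$, and then apply Lemma \ref{energy} to replace $\|u(\cdot,t)\|_{L^2(\Omega)}^2$ by $\|u(\cdot,0)\|_{L^2(\Omega)}^2-C\|g\|_{L^2(\Sigma)}^2$. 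On the boundary term I would use $|u|=|g|$ on $\Sigma$ and the boundedness of $A\psi$, $H\cdot\nu$, and $e^{2s\psi}$ on $\overline{Q_{t_1'}}$ to bound it by $Cse^{Cs}\|g\|_{L^2(\Sigma)}^2$; on the final-time term I would use $\psi(x,t_1')\le d_M-\beta t_1'$ together with Lemma \ref{energy} to bound it by $Cse^{2s(d_M-\beta t_1')}(\|u(\cdot,0)\|_{L^2(\Omega)}^2+C\|g\|_{L^2(\Sigma)}^2)$.

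Collecting these estimates and dividing through by $e^{2s(d_m-\beta\epsilon)}$, I obtain an inequality of the shape
\[
\left(\epsilon - Ce^{-2s(\kappa-\beta\epsilon)}\right)\|u(\cdot,0)\|_{L^2(\Omega)}^2 \le Ce^{Cs}\|g\|_{L^2(\Sigma)}^2,
\]
where the crucial sign comes from $d_M-d_m-\beta t_1'+\beta\epsilon<-\kappa+\beta\epsilon<0$ once $\epsilon$ is small enough that $\beta\epsilon<\kappa$. Choosing $s$ large enough that the coefficient on the left is positive yields $\|u(\cdot,0)\|_{L^2(\Omega)}\le C\|g\|_{L^2(\Sigma)}$, and one more application of Lemma \ref{energy} upgrades this to $\|u(\cdot,t)\|_{L^2(\Omega)}\le C\|g\|_{L^2(\Sigma)}$ for all $t\in[0,t_1']$; Proposition \ref{continuity} then finishes the proof on $[0,T]$. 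The main point to be careful about—rather than a genuine obstacle—is bookkeeping the chain of choices $\beta<2\delta$, then $\kappa$, then $\epsilon$, so that both $d_m-\beta\epsilon>0$ and $\kappa-\beta\epsilon>0$ hold simultaneously; since $d_m>0$ and $\kappa>0$ are fixed before $\epsilon$ is chosen, this is always achievable, and the extra factor $s^2$ (versus $s$) on the left of \eqref{estimate'} only helps and plays no essential role in the argument.
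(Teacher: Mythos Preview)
Your proposal is correct and follows essentially the same route as the paper's own proof: reduce to $[0,t_1']$ via Proposition~\ref{continuity}, apply the Carleman estimate of Proposition~\ref{Carleman'} on $Q_{t_1'}$ with $\theta_0=H(0)/|H(0)|$, bound the left-hand side from below on $(0,\epsilon)$ and the right-hand side using Lemma~\ref{energy}, and conclude by taking $s$ large. The only cosmetic difference is that the paper keeps the extra factor of $s$ from the $s^2$ on the left of \eqref{estimate'}, obtaining $(\epsilon s - Ce^{-2s(\kappa-\beta\epsilon)})$ rather than your $(\epsilon - Ce^{-2s(\kappa-\beta\epsilon)})$, but as you note this plays no role in the argument.
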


\begin{proof}
Let $t_1'>0$ be the positive number defined by \eqref{def'} and $x_0\in\overline{\Omega}^c$ be the point satisfying \eqref{angle'} with
\[T_0'<t_1',\]
where $T_0'$ is defined by \eqref{T_0'}. Owing to Proposition \ref{continuity}, it suffices to prove \eqref{observ_ineq'} in the interval $[0,t_1']$. For the fixed $x_0\in\overline{\Omega}^c$, we take $0<\beta<2\delta$, where $\delta$ is defined by \eqref{delta'}, satisfying
\[(T_0'<)\frac{d_M-d_m}{\beta}<t_1',\]
where we recall
\[d_M:=\max_{x\in\overline{\Omega}}|x-x_0|^2,\quad d_m:=\min_{x\in\overline{\Omega}}|x-x_0|^2.\]
Then, there exists $\kappa>0$ such that
\begin{equation}\label{negative'}d_M-d_m-\beta t_1'<-\kappa.\end{equation}
Henceforth, by $C>0$ we denote a generic constant independent of $u$ and $g$ which may change from line to line, unless specified otherwise. We find that $H$ satisfies the assumptions \eqref{B.1}--\eqref{B.3} by taking $\tau=t_1'$ and $\theta_0=\frac{H(0)}{|H(0)|}$ needed for Proposition \ref{Carleman'}. Set $Q_{t_1'}:=\Omega\times(0,t_1')$ and $\Sigma_{t_1'}:=\partial\Omega\times(0,t_1')$. Applying Proposition \ref{Carleman'} to $u\in H^1(Q_{t_1'})$ satisfying \eqref{boundary} yields
\begin{align}\label{C'}s^2\int_{Q_{t_1'}} e^{2s\psi}|u|^2dxdt&\le Cs\int_{\Sigma_{t_1'}} e^{2s\psi}A\psi(H(t)\cdot \nu(x))|u|^2d\sigma dt\\
&\quad+Cs\int_\Omega e^{2s\varphi(x,t_1')}|u(x,t_1')|^2dx.\notag\end{align}
On the left-hand side of \eqref{C'}, we obtain
\begin{equation}\label{sharp'}s^2\int_{Q_{t_1'}}e^{2s\psi}|u|^2dxdt\ge s^2e^{2s(d_m-\beta\epsilon)}\int_0^\epsilon\int_\Omega |u|^2dxdt,\end{equation}
where $\epsilon\in(0,t_1')$ is an arbitrary small constant satisfying for all $x\in\overline{\Omega}$ and $0\le t\le\epsilon$,
\[\psi(x,t)> 0,\]
i.e.,
\begin{equation}\label{min'}d_m-\beta\epsilon> 0.\end{equation}
Furthermore, applying Lemma \ref{energy} in \eqref{sharp'}, we have
\begin{align}\label{lhs'}s^2\int_{Q_{t_1'}} e^{2s\psi}|u|^2dxdt&\ge s^2e^{2s(d_m-\beta\epsilon)}\int_0^\epsilon\int_\Omega |u|^2dxdt\\
&\ge \epsilon s^2e^{2s(d_m-\beta\epsilon)}\left(\|u(\cdot,0)\|_{L^2(\Omega)}^2-C\|g\|_{L^2(\Sigma)}^2\right).\notag\end{align}
Moreover, in regard to the second summand of the right-hand side of \eqref{C'}, applying Lemma \ref{energy} yields
\begin{align}\label{rhs'}&Cs\int_\Omega e^{2s\psi(x,t_1')}|u(x,t_1')|^2dx\\
&\quad\le Cse^{2s(d_M-\beta t_1')}\|u(\cdot,t_1')\|_{L^2(\Omega)}^2\notag\\
&\quad\le Cse^{2s(d_M-\beta t_1')}\left(\|u(\cdot,0)\|_{L^2(\Omega)}^2+C\|g\|_{L^2(\Sigma)}^2\right).\notag\end{align}
From \eqref{C'}, \eqref{lhs'}, and \eqref{rhs'}, we obtain
\begin{align*}&\epsilon s^2e^{2s(d_m-\beta\epsilon)}\left(\|u(\cdot,0)\|_{L^2(\Omega)}^2-C\|g\|_{L^2(\Sigma)}^2\right)\\
&\quad\le Cse^{2s(d_M-\beta t_1')}\left(\|u(\cdot,0)\|_{L^2(\Omega)}^2+C\|g\|_{L^2(\Sigma)}^2\right)+Cse^{Cs}\|g\|_{L^2(\Sigma)}^2,\end{align*}
i.e.,
\[e^{2s(d_m-\beta\epsilon)}\left(\epsilon s-Ce^{2s(d_M-d_m-\beta t_1'+\beta\epsilon)}\right)\|u(\cdot,0)\|_{L^2(\Omega)}^2\le Ce^{Cs}\|g\|_{L^2(\Sigma)}^2.\]
Applying \eqref{negative'} and \eqref{min'} to the left-hand side of the above inequality yields
\[\left(\epsilon s-Ce^{-2s(\kappa-\beta\epsilon)}\right)\|u(\cdot,0)\|_{L^2(\Omega)}^2\le Ce^{Cs}\|g\|_{L^2(\Sigma)}^2.\]
By choosing $s>s_*$ large enough to satisfy $\epsilon s-Ce^{-2s(\kappa-\beta\epsilon)}>0$ for the sufficiently small $\epsilon>0$ and applying Lemma \ref{energy} again on the left-hand side of the above inequality, we have
\[\|u(\cdot,t)\|_{L^2(\Omega)}^2\le C\|g\|_{L^2(\Sigma)}^2\]
for all $t\in[0,t_1']$.
\end{proof}

\begin{remark}
In the non-degenerate case, we focused only on the time interval $[0,t_1']$ near $0$ and proved the observability inequality under the assumption that $t_1'$ is large enough. Needless to say, if there exists a sufficiently long time interval $[t_*,t^*]\subset[0,T]$, if not near $0$, on which $\frac{H(t)}{|H(t)|}\cdot\frac{H(t_*)}{|H(t_*)|}\ge c_0$ holds, the observability inequality holds on the interval, which implies it holds also on $[0,T]$ by the similar arguments using Lemma \ref{energy} and Proposition \ref{continuity}.
\end{remark}

\section*{Acknowledgment}
This work was supported by Grant-in-Aid for Scientific Research (S) Grant Number JP15H05740, Grant-in-Aid for JSPS Fellows Grant Number JP20J11497, and Istituto Nazionale di Alta Matematica (IN$\delta$AM), through the GNAMPA Research Project 2020, titled ``Problemi inversi e di controllo per equazioni di evoluzione e loro applicazioni'', coordinated by the first author. Moreover, this research was performed in the framework of the French-German-Italian Laboratoire International Associ\'e (LIA), named COPDESC, on Applied Analysis, issued by CNRS, MPI, and IN$\delta$AM, during the IN$\delta$AM Intensive Period-2019, \lq\lq {\it Shape optimization, control and inverse problems for PDEs}'', held in Napoli in May-June-July 2019.

The authors thank Prof. Piermarco Cannarsa and Prof. Masahiro Yamamoto for the useful and interesting discussions about the topics of this paper.

\bibliographystyle{plain}
\bibliography{degenerate_JEE_r1}

\end{document}